
\documentclass[letterpaper, 10 pt, conference]{ieeeconf}  

\IEEEoverridecommandlockouts                              

\overrideIEEEmargins                                      


\usepackage{graphics} 
\usepackage{subfig}
\usepackage{epsfig} 
\usepackage{amsmath} 
\usepackage{amssymb}  
\usepackage{mathrsfs}
\usepackage{cite}
\usepackage{epstopdf}
\usepackage{hyperref}

\newtheorem{theorem}{\textbf{Theorem}}

\newtheorem{corollary}{\textbf{Corollary}}
\newtheorem{assumption}{\textbf{Assumption}}
\newtheorem{definition}{\textbf{Definition}}
\newtheorem{example}{\textbf{Example}}
\newtheorem{remark}{\textbf{Remark}}

\newcommand{\Real}{\mathbb R}

\graphicspath{{Images/}}

\title{\LARGE \bf Distributed Feedback Controllers for Stable Cooperative Locomotion of Quadrupedal Robots: A Virtual Constraint Approach*}

\author{Kaveh Akbari Hamed$^{1}$, Vinay R. Kamidi$^{1}$, Abhishek Pandala$^{1}$, Wen-Loong Ma$^{2}$, and  Aaron D. Ames$^{2}$
\thanks{*The work of K. Akbari Hamed is supported by the National Science Foundation (NSF) under Grant Numbers 1854898, 1906727, 1923216, and 1924617. The work of V. R. Kamidi and A. Pandala is supported by the NSF under the Grant Number 1854898. The work of A. D. Ames is supported by the NSF under Grant Numbers 1544332, 1724457, 1724464, 1923239, and 1924526 as well as Disney Research LA. The content is solely the responsibility of the authors and does not necessarily represent the official views of the NSF.}
\thanks{$^{1}$K. Akbari Hamed, V.R. Kamidi, and A. Pandala are with the Department of Mechanical Engineering, Virginia Tech, Blacksburg, VA 24061, USA, {\tt\small kavehakbarihamed@vt.edu}, {\tt\small vinay28@vt.edu}, and {\tt\small agp19@vt.edu}}
\thanks{$^{2}$W. Ma and A. D. Ames are with the Department of Mechanical and Civil Engineering, California Institute of Technology, Pasadena, CA 91125, USA, {\texttt{\small{wma@caltech.edu}} and \texttt{\small{ames@cds.caltech.edu}}}}
}

\begin{document}

\maketitle
\thispagestyle{empty}
\pagestyle{empty}


\begin{abstract}
This paper aims to develop distributed feedback control algorithms that allow cooperative locomotion of quadrupedal robots which are coupled to each other by holonomic constraints. These constraints can arise from collaborative manipulation of objects during locomotion. In addressing this problem, the complex hybrid dynamical models that describe collaborative legged locomotion are studied. The complex periodic orbits (i.e., gaits) of these sophisticated and high-dimensional hybrid systems are investigated. We consider a set of virtual constraints that stabilizes locomotion of a single agent. The paper then generates modified and local virtual constraints for each agent that allow stable collaborative locomotion. Optimal distributed feedback controllers, based on nonlinear control and quadratic programming, are developed to impose the local virtual constraints. To demonstrate the power of the analytical foundation, an extensive numerical simulation for cooperative locomotion of two quadrupedal robots with robotic manipulators is presented. The numerical complex hybrid model has 64 continuous-time domains, 192 discrete-time transitions, 96 state variables, and 36 control inputs.
\end{abstract}


\section{INTRODUCTION}
\label{INTRODUCTION}
\vspace{-0.5em}

This paper aims to develop a formal foundation, based on hybrid systems theory, nonlinear control, and quadratic programming (QP), to develop distributed feedback control algorithms that stabilize cooperative locomotion of quadrupedal robots while steering objects. Legged robots that are augmented with manipulators can form \textit{collaborative robot (co-robot) teams} that assist humans in different aspects of their life such as labor-intensive tasks, construction, and manufacturing. Although important theoretical and technological advances have allowed the development of distributed controllers for motion control of complex robot systems, state-of-the-art approaches address the control of multiagent systems composed of collaborative robotic arms \cite{Murray_Control_primitives}, multifingered robot hands \cite{Murray_Book}, aerial vehicles \cite{RSS2013,MT:NM:14}, and ground vehicles \cite{DP:MT:14,Mesbahi_Book,Bullo_Book}, but \textit{not} cooperative legged agents. Legged robots are \textit{inherently unstable}, as opposed to most of the systems where these algorithms have been deployed. Furthermore, the evolution of legged co-robot teams that cooperatively manipulate objects can be represented by high-dimensional and \textit{complex hybrid dynamical systems} which complicate the design of distributed control algorithms.

\begin{figure}[t!]
\centering
\includegraphics[width=0.9\linewidth]{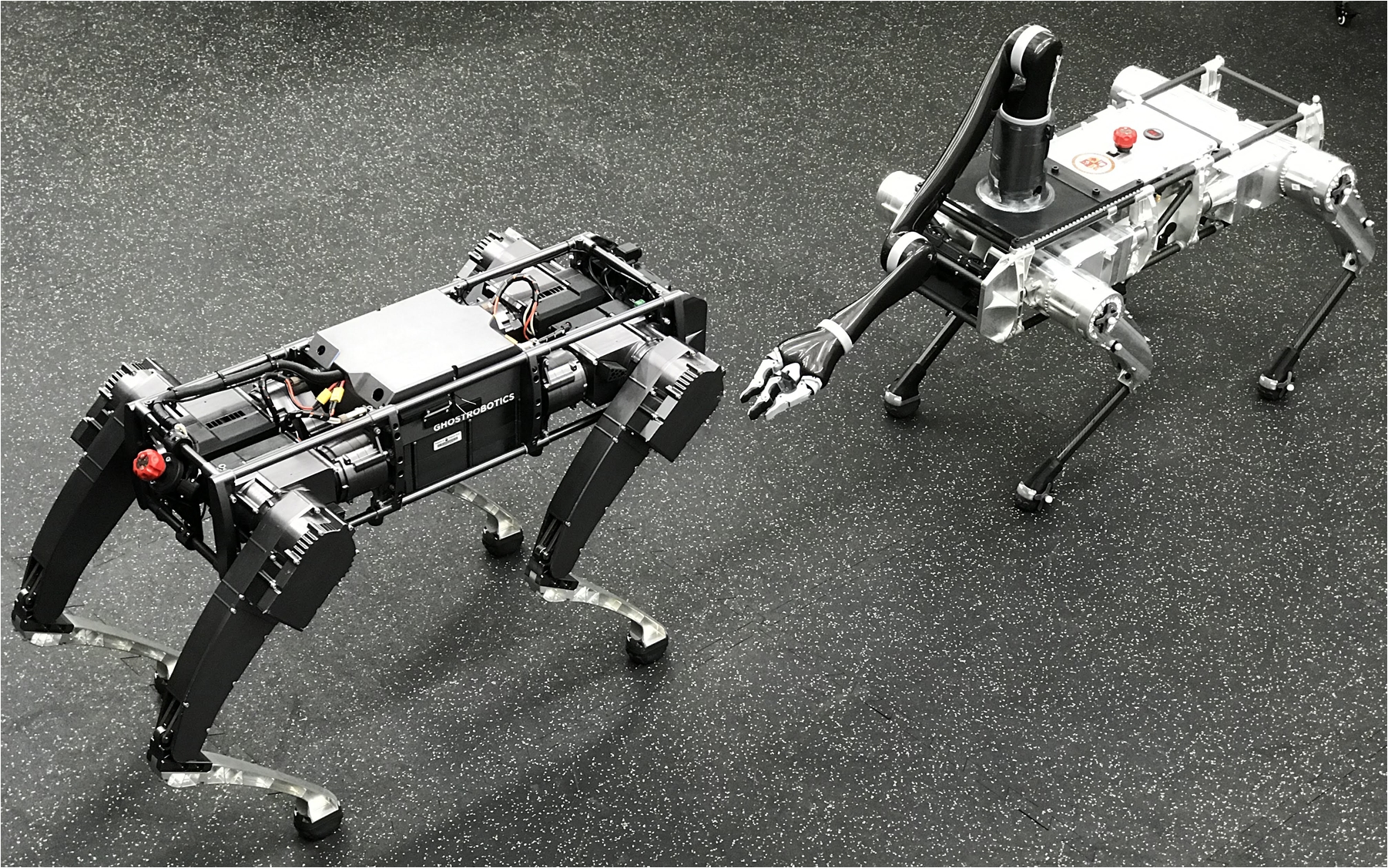}
\vspace{-0.3em}
\caption{Two Vision 60 robots, augmented with a Kionva arm, whose full-order models will be used for the numerical simulations of cooperative locomotion.}
\label{Two_V60s}
\vspace{-2em}
\end{figure}

\begin{figure*}[t!]
\centering
\vspace{0.07em}
\includegraphics[width=\linewidth]{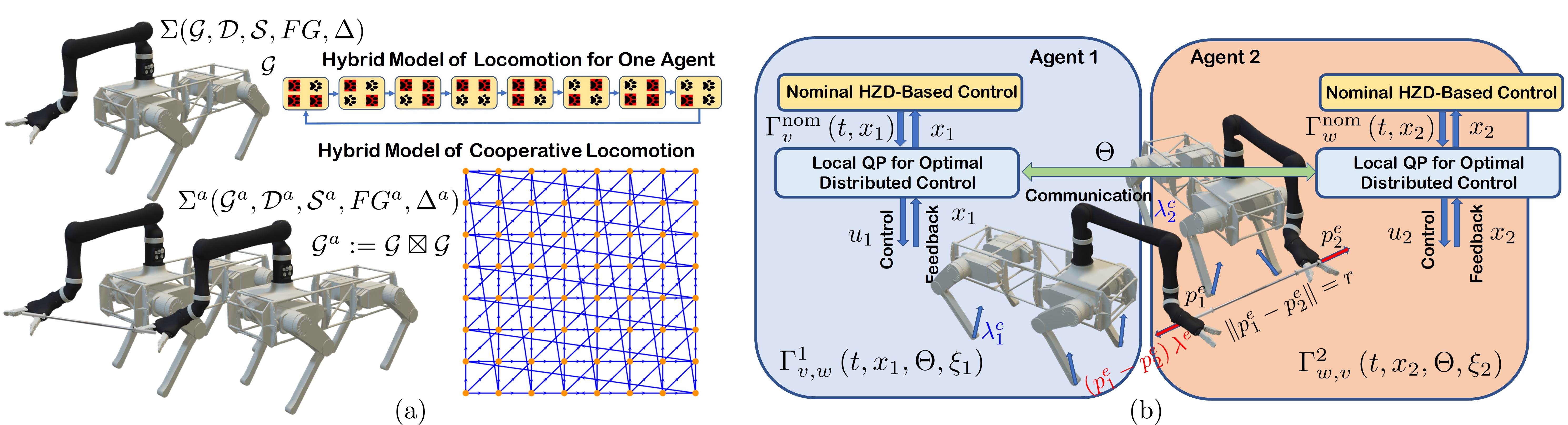}
\vspace{-1.5em}
\caption{(a) Illustration of the hybrid models for locomotion of one agent and two agents. The figure depicts the digraphs for $8$-domain walking locomotion as well as $64$-domain cooperative locomotion. (b) Illustration of the proposed distributed feedback control algorithms based on HZD and local QPs.}
\label{Unified_Illsuartion}
\vspace{-2em}
\end{figure*}

\vspace{-0.5em}
\subsection{Related Work and Motivation}
\vspace{-0.1em}

Hybrid systems theory has become a powerful tool to design nonlinear controllers for dynamic legged locomotion both in theory and practice \cite{Grizzle_Asymptotically_Stable_Walking_IEEE_TAC,Westervelt_Grizzle_Koditschek_HZD_IEEE_TRO,Chevallereau_Grizzle_3D_Biped_IEEE_TRO,Ames_RES_CLF_IEEE_TAC,Ames_DURUS_TRO,Sreenath_Grizzle_HZD_Walking_IJRR,Park_Grizzle_Finite_State_Machine_IEEE_TRO,Poulakakis_Grizzle_SLIP_IEEE_TAC,Tedrake_Robus_Limit_Cycles_CDC,Byl_HZD,Johnson_Burden_Koditschek,Spong_Controlled_Symmetries_IEEE_TAC,Manchester_Tedrake_LQR_IJRR,Ioannis_adaptation_01,Vasudevan2017,Hamed_Buss_Grizzle_BMI_IJRR}. Nonlinear controllers that address hybrid nature of legged locomotion has come out of hybrid reduction \cite{Ames_HybridReduction_Original_Paper}, controlled symmetries \cite{Spong_Controlled_Symmetries_IEEE_TAC}, transverse linearization \cite{Manchester_Tedrake_LQR_IJRR}, and hybrid zero dynamics (HZD) \cite{Westervelt_Grizzle_Koditschek_HZD_IEEE_TRO,Ames_RES_CLF_IEEE_TAC}. HZD-based controllers have been numerically and experimentally evaluated for bipedal locomotion \cite{Sreenath_Grizzle_HZD_Walking_IJRR,Jessy_Book,Ramezani_Hurst_Hamed_Grizzle_ATRIAS_ASME,Buss_Hamed_Griffin_Grizzle_BMI_ACC,Ames_RES_CLF_IEEE_TAC,Ames_DURUS_TRO,Manchester_Tedrake_LQR_IJRR,Martin_Schmiedeler_IJRR}, quadrupedal locomotion \cite{Hamed_Ma_Ames_Vision60,Ioannis2016}, powered prosthetic legs \cite{Gregg_Virtual_Constraints_Powered_Prosthetic_IEEE_TRO}, and exoskeletons \cite{Grizzle_Decentralized}.

Existing nonlinear control approaches for legged robots are tailored to the path planning and stabilization of dynamic gaits for one legged machine, but \textit{not} complex hybrid dynamical models that describe the evolution of multiagent legged robotic systems. This is mainly due to the fact that state-of-the-art techniques for dynamic locomotion are centralized approaches that \textit{cannot} be easily transferred to legged co-robot teams. A legged co-robot team that manipulates an object can be modeled as a set of legged robots which are coupled to each other and the object by a set of holonomic constraints. The question is how to construct controllers for such a complex robotic system that control locomotion with many DOFs and large amounts of sensory data? Computing the control torques for such a composite system in 1kHz is often impossible---\textit{this motivates the importance of developing distributed and decentralized controllers that address lower-dimensional subsystems (e.g., each agent)}.

\subsection{Objectives and Contributions}

The \textit{objectives} and \textit{contributions} of this paper are as follows. We address complex hybrid models that describe cooperative locomotion of legged robotic systems. The properties as well as periodic solutions (i.e., complex gaits) of these sophisticated hybrid dynamical models are investigated. We develop a distributed feedback control algorithm, based on HZD and distributed QPs, to stabilize cooperative gaits. We study virtual constraints and nominal HZD-based controllers that stablize locomotion of one agent. We then modify the virtual constraint controllers for the cooperative locomotion of two agents. A QP formulation is set up to compute the optimal  distributed controllers that are close to the nominal HZD controllers of each agent while imposing the modified virtual constraints. To demonstrate the power of the analytical foundation, an extensive numerical simulation of two quadrupedal agents (Vision 60 robots) with Kinova arms steering an object is presented (see Fig. \ref{Two_V60s}). In this simulation, the complex hybrid model has 64 continuous-time domains, 192 discrete transitions, 96 state variables, and 36 control inputs. Our previous work in \cite{Hamed_Gregg_IEEE_TAC,Hamed_Gregg_decentralized_control_IEEE_CST} developed an optimization algorithm, based on Poincar\'e return maps and linear and bilinear matrix inequalities (LMIs and BMIs) to synthesize decentralized controllers for legged locomotion. However, the Jacobian linearization of the Poincar\'e map for the above-mentioned complex hybrid model with 64 continuous-time models and 96 state variables is computationally intensive. The current paper addresses this complexity with the proper modification of local virtual constraints and the QP formulation. The current work is different from \cite{Hamed_Kamidi_Ma_Leonessa_Ames_Dog_Human} which addresses cooperative locomotion of guide robots and humans. In particular, \cite{Hamed_Kamidi_Ma_Leonessa_Ames_Dog_Human} considers a leash structure with a ``variable'' and ``controlled'' length and angle that stabilize the cooperative locomotion of quadrupedal robots and humans. In the current study, the object is ``passive'' with a ``fixed length''. The paper instead develops distributed controllers for each agent that stabilize the cooperative locomotion subject to \textit{holonomic constraints}. The current work is also different from the study presented in \cite{Ioannis_adaptation_01} for locomotion adaptation of limit cycle bipedal walkers in leader/follower collaborative tasks. The current paper addresses complex models of two agents while designing local and optimal distributed controllers for stable collaborative locomotion. Reference \cite{Ioannis_adaptation_01} only considers the follower dynamics while  designing a switching based controller for its adaptation to a persistent external force that represents the leader.


\vspace{-0.4em}
\section{MODELS OF COOPERATIVE LOCOMOTION}
\label{HYBRID MODELS OF COOPERATIVE LOCOMOTION}

\subsection{Hybrid Model of Locomotion for One Agent}
\label{Hybrid Model of Locomotion for One Agent}

We consider a full-order dynamical model of Vision 60 that is augmented with a Kinova arm for the locomotion and manipulation purposes (see Fig. \ref{Two_V60s}). Vision 60 is an autonomous quadrupedal robot that is designed and manufactured by Ghost Robotics\footnote{\url{https://www.ghostrobotics.io/}}.
Vision 60 has 18 DOFs of which 12 leg DOFs are actuated. In particular, each leg of the robot consists of a 1 DOF actuated knee joint with pitch motion and a 2 DOF actuated hip joint with pitch and roll motions. The remaining 6 DOFs are associated with the translational and rotational movements of the body. In this paper, we consider a 6 DOF Kinova arm that is affixed on Vision 60. The generalized coordinates vector of each agent is represented by $q:=\textrm{col}(p_{b},\phi_{b},q_{\textrm{body}})\in\mathcal{Q}\subset\Real^{24}$, where $p_{b}\in\Real^{3}$ and $\phi_{b}\in\Real^{3}$ denote the absolute position and orientation (i.e., roll-pitch-yaw) of the robot with respect to a world frame, respectively. In addition,  $q_{\textrm{body}}$ denotes a set of coordinates that describe the shape of Vision 60 together with the arm, and $\mathcal{Q}$ is the configuration space. The state and control inputs (i.e., joint torques) of the system are denoted by $x:=\textrm{col}(q,\dot{q})\in\mathcal{X}:=\textrm{T}\mathcal{Q}\subset\Real^{48}$ and $u\in\mathcal{U}\subset\Real^{18}$, respectively. In our notation, $\mathcal{X}$ and $\mathcal{U}$ represent the state manifold and set of admissible controls.

Throughout this paper, we will consider a hybrid systems formulation to describe \textit{multi-domain} quadrupedal locomotion for each agent as  $\Sigma\left(\mathcal{G},\mathcal{D},\mathcal{S},FG,\Delta\right)$, in which $\mathcal{G}=(\mathcal{V},\mathcal{E})$ denotes the \textit{directed cycle} corresponding to the desired locomotion pattern with the \textit{vertices set} $\mathcal{V}$ and the \textit{edges set} $\mathcal{E}\subseteq\mathcal{V}\times\mathcal{V}$. In our formulation, the vertices represent the \textit{continuous-time domains} of quadrupedal locomotion that are described by ordinary differential equations (ODEs) arising from the Lagrangian dynamics. The edges represent the \textit{discrete-time transitioning} between continuous-time domains arising from the changes in physical constraints. The evolution of the mechanical system during the domain $v\in\mathcal{V}$ is described by the ODE $\dot{x}=f_{v}(x)+g_{v}(x)\,u$ for all $(x,u)\in\mathcal{D}$, where $\mathcal{D}$ represents the domains of admissibility.  The set of control systems is given by $FG:=\{(f_{v},g_{v})\}_{v\in\mathcal{V}}$. The evolution of the system during the discrete-time transition $e\in\mathcal{E}$ is then described by the reset law (i.e., reinitialization rule) $x^{+}=\Delta_{e}(x^{-})$, where $x^{-}$ and $x^{+}$ represent the state of the system right before and after the transition, respectively. Moreover, $\Delta:=\{\Delta_{e}\}_{e\in\mathcal{E}}$ is the set of discrete-time dynamics. Finally, the guards of the hybrid system $\Sigma$ is given by the switching manifolds $\mathcal{S}:=\{\mathcal{S}_{e}\}_{e\in\mathcal{E}}$ on which the discrete-time transitions occur.


\textit{Continuous-Time Dynamics:} Let us assume that $J_{v}^{c}(q)$ denotes the contact Jacobian matrix during the continuous-time domain $v\in\mathcal{V}$, where the superscript ``$c$'' stands for the contact. Then, the evolution of the mechanical system in domain $v$ can be described by the following ODE:
\begin{alignat}{4}
&D(q)\,\ddot{q}&&+H(q,\dot{q})&&=B\,u+J_{v}^{c\top}(q)\,\lambda^{c}\label{dyn_02}\\
&J_{v}^{c}(q)\,\ddot{q}&&+\dot{J}_{v}^{c}(q,\dot{q})&&=0,\label{dyn_03}
\end{alignat}
where $D(q)$ is the mass-inertia matrix, $H(q,\dot{q})$ represents the Coriolis, centrifugal, and gravitational terms, $B$ is the input distribution matrix, $\lambda^{c}$ denotes the ground reaction forces (i.e., Lagrange multipliers), and $\dot{J}_{v}^{c}(q,\dot{q}):=\frac{\partial}{\partial q}(J_{v}^{c}(q)\,\dot{q})\,\dot{q}$. By eliminating $\lambda^{c}$, one can express \eqref{dyn_02} and \eqref{dyn_03} as the input-affine system $\dot{x}=f_{v}(x)+g_{v}(x)\,u$.

\textit{Discrete-Time Dynamics:} The \textit{next-domain function} $\mu:\mathcal{V}\rightarrow\mathcal{V}$ is defined as $\mu(v)=w$ if the vertices $v$ and $w$ are adjacent on $\mathcal{G}$, or equivalently, if $e=(v\rightarrow{}w)\in\mathcal{E}$. If during the discrete transition $e$, an existing contact breaks, we define the reset law $\Delta_{e}(x)$ as identity to preserve the continuity of position and velocity. However, if a new contact point is added to the existing set of contacts points with the ground, we make use of the rigid impact model as follows \cite{Hurmuzlu_Impact}
\begin{alignat}{4}
D(q)\left(\dot{q}^{+}-\dot{q}^{-}\right)=J_{\mu(v)}^{c\top}\,\delta\lambda^{c},\quad J_{\mu(v)}^{c}(q)\,\dot{q}^{+}=0,\label{dyn_05}
\end{alignat}
to model the abrupt changes in the velocity components according to the impact. Here, $\delta\lambda^{c}$ represents the intensity of the impulsive ground reaction forces. From \eqref{dyn_05} and continuity of position, one can obtain the impact map as $x^{+}=\Delta_{e}(x^{-})$.


\vspace{-0.2em}
\subsection{Continuous-Time Models for Cooperative Locomotion}
\label{Continuous-Time Models for Cooperative Locomotion}

In this section, we will address models of continuous-time domains that describe cooperative locomotion of two agents while steering an object. In our notation, the subscript $i\in\{1,2\}$ represents the agent number. The state variables and control inputs for the agent $i$ are represented by $x_{i}:=\textrm{col}(q_{i},\dot{q}_{i})\in\mathcal{X}$ and $u_{i}\in\mathcal{U}$, respectively.
To simplify the analysis, we will assume that the agents are identical. Let $p^{e}(q_{i})\in\Real^{3}$ denote the Cartesian coordinates of the end effector (EE) with respect to the world frame. We consider a holonomic constraint between the EEs as follows (see Fig. \ref{Unified_Illsuartion}a):
\begin{equation}
    \left\|p^{e}(q_{1})-p^{e}(q_{2})\right\|_{2}^{2}
    =\textrm{constant}.\label{holonomic_EE}
\end{equation}
Equation \eqref{holonomic_EE} states that the Euclidean distance between the agents' EEs is constant. In particular, we assume that the agents are carrying a massless bar with a fixed length whose ends are connected to the agents' EEs via socket (i.e., ball) joints. Differentiating \eqref{holonomic_EE} results in $(p_{1}^{e}-p_{2}^{e})^\top(J^{e}_{1}\,\dot{q}_{1} - J_{2}^{e}\,\dot{q}_{2})=0$ and $(p_{1}^{e}-p_{2}^{e})^\top(J^{e}_{1}\,\ddot{q}_{1} - J_{2}^{e}\,\ddot{q}_{2})
+\|\dot{p}_{1}^{e} - \dot{p}_{2}^{e}\|^{2}=0$, where $J^{e}(q):=\frac{\partial p^{e}}{\partial q}(q)$, $p_{i}^{e}:=p^{e}(q_{i})$, $J_{i}^{e}:=J^{e}(q_{i})$, and $\dot{p}_{i}^{e}=J_{i}^{e}\,\dot{q}_{i}$ for $i\in\{1,2\}$. Let us assume that $\lambda^{e}\in\Real$ denotes the Lagrange multiplier corresponding to the holonomic constraint \eqref{holonomic_EE}. Suppose further that the agents 1 and 2 are in the continuous-time domains $v\in\mathcal{V}$ and $w\in\mathcal{V}$, respectively. Then, the equations of motions can be described by the following coupled and constrained dynamics:
\begin{alignat}{6}
&D_{1}\,\ddot{q}_{1} \,\,\,+ H_{1} \,\,\,\,= B\,u_{1} + J_{v,1}^{c\top}\,\lambda^{c}_{1} \,+ J_{1}^{e\top} \left(p_{1}^{e} - p_{2}^{e}\right) \lambda^{e}\label{dyn_06}\\
&D_{2}\,\ddot{q}_{2} \,\,\,+ H_{2} \,\,\,\,= B\,u_{2} + J_{w,2}^{c\top}\,\lambda^{c}_{2} + J_{2}^{e\top} \left(p_{2}^{e} - p_{1}^{e}\right) \lambda^{e}\label{dyn_07}\\
&J_{v,1}^{c}\,\ddot{q}_{1} \,\,+ \dot{J}_{v,1}^{c} \,\,= 0\label{dyn_08}\\
&J_{w,2}^{c}\,\ddot{q}_{1} \,+ \dot{J}_{w,2}^{c} \,= 0\label{dyn_09}\\
&\left(p_{1}^{e}-p_{2}^{e}\right)^\top\left(J^{e}_{1}\,\ddot{q}_{1} - J_{2}^{e}\,\ddot{q}_{2}\right)+\left\|\dot{p}_{1}^{e} -\dot{p}_{2}^{e}\right\|^{2} = 0,\label{dyn_10}
\end{alignat}
where $D_{i}:=D(q_{i})$, $H_{i}:=H(q_{i},\dot{q}_{i})$, $J_{v,i}^{c}:=J_{v}^{c}(q_{i})$, and $\dot{J}_{v,i}^{c}:=\dot{J}_{v}^{c}(q_{i},\dot{q}_{i})$ for $i\in\{1,2\}$. We remark that in \eqref{dyn_06} and \eqref{dyn_07}, $(p_{1}^{e}-p_{2}^{e})\,\lambda^{e}\in\Real^{3}$ and $(p_{2}^{e}-p_{1}^{e})\,\lambda^{e}\in\Real^{3}$ represent the forces associated with the holonomic constraint \eqref{holonomic_EE} that are aligned with the bar and applied to the EEs of the agents 1 and 2, respectively. By eliminating the Lagrange multipliers $\lambda_{1}^{c}$, $\lambda_{2}^{c}$, and $\lambda^{e}$ from \eqref{dyn_06}-\eqref{dyn_10}, one can express the evolution of the composite mechanical system by $\dot{x}^{a} = f_{v,w}^{a}\left(x^{a}\right) + g_{v,w}^{a}\left(x^{a}\right) u^{a}$,
where the superscript ``$a$'' stands for the augmented system, and $x^{a}:=\textrm{col}(x_{1},x_{2})\in\mathcal{X}\times\mathcal{X}$ and $u^{a}:=\textrm{col}(u_{1},u_{2})\in\mathcal{U}\times\mathcal{U}$ denote the augmented state and control inputs, respectively.


\vspace{-0.2em}
\subsection{Discrete-Time Models for Cooperative Locomotion}
\label{Discrete-Time Models for Cooperative Locomotion}

This section addresses the discrete-time transition for the composite mechanical system. We consider a general case in which the agents 1 and 2 can switch from any continuous-time domains $v$ and $w$ to $v'$ and $w'$, respectively. This discrete-time transition is denoted by $(v,w)\rightarrow(v',w')$. We define the extended contact Jacobian matrix for the agent 1 as $\hat{J}_{v\rightarrow{v'}}^{c}(q_{1})$ by $\hat{J}_{v\rightarrow{v'}}^{c}(q_{1}):=J_{v}^{c}(q_{1})$ if $v'=v$ and $\hat{J}_{v\rightarrow{v'}}^{c}(q_{1}):=J_{\mu(v)}^{c}(q_{1})$ for $v'\neq{v}$. An analogous extended contact Jacobian matrix $\hat{J}_{w\rightarrow{w'}}^{c}(q_{2})$ can be defined for the agent 2. The evolution of the composite mechanical system over the infinitesimal period of the impact can be then described by the following coupled dynamics
\begin{alignat}{4}
&D_{1}\left(\dot{q}_{1}^{+} - \dot{q}_{1}^{-}\right)= \hat{J}_{v\rightarrow{}v',1}^{c\top}\,\delta\lambda_{1}^{c}\,+J_{1}^{e\top}\left(p_{1}^{e}-p_{2}^{e}\right)\delta\lambda^{e}\label{dyn_11}\\
&D_{2}\left(\dot{q}_{2}^{+} - \dot{q}_{2}^{-}\right)= \hat{J}_{w\rightarrow{}w',2}^{c\top}\,\delta\lambda_{2}^{c}+J_{2}^{e\top}\left(p_{2}^{e}-p_{1}^{e}\right)\delta\lambda^{e}\label{dyn_12}\\
&\hat{J}_{v\rightarrow{}v',1}^{c}\,\dot{q}_{1}^{+}=0\label{dyn_13}\\
&\hat{J}_{w\rightarrow{}w',2}^{c}\,\dot{q}_{2}^{+}=0\label{dyn_14}\\
&\left(p_{1}^{e}-p_{2}^{e}\right)^\top\left(J^{e}_{1}\,\dot{q}_{1}^{+} - J_{2}^{e}\,\dot{q}_{2}^{+}\right)=0,\label{dyn_15}
\end{alignat}
where $\delta\lambda^{c}_{1}$, $\delta\lambda^{c}_{2}$, and $\delta\lambda^{e}$ represent the intensity of the impulsive Lagrange multipliers at the leg ends and EEs. By eliminating the Lagrange multipliers from \eqref{dyn_11}-\eqref{dyn_15}, one can obtain an augmented reset law as $x^{a+}=\Delta_{(v,w)\rightarrow(v',w')}^{a}(x^{a-})$.


\subsection{Complex Hybrid Model for Cooperative Locomotion}
\label{Complex Hybrid Model for Cooperative Locomotion}

The complex hybrid model that describes the cooperative locomotion of two robots will have a complex graph that is taken as the strong product of graph $\mathcal{G}=(\mathcal{V},\mathcal{E})$ with itself. In particular, this strong product is represented by $\mathcal{G}^{a}:=\mathcal{G}\boxtimes\mathcal{G}=(\mathcal{V}^{a},\mathcal{E}^{a})$ that has the vertices set $\mathcal{V}^{a}:=\mathcal{V}\times\mathcal{V}$. Furthermore to define the edges set $\mathcal{E}^{a}$, we remark that any $e=\left((v,w)\rightarrow(v',w')\right)\in\mathcal{E}^{a}$ if and only if 1) $v=v'$ and $(w\rightarrow{}w')$ is an edge in $\mathcal{E}$, or 2) $(v\rightarrow{}v')$ is an edge in $\mathcal{E}$ and $w=w'$, or 3) $(v\rightarrow{}v')$ is an edge in $\mathcal{E}$ and $(w\rightarrow{}w')$ is an edge in $\mathcal{E}$. Finally, the complex hybrid model is defined as
$\Sigma^{a}\left(\mathcal{G}^{a},\mathcal{D}^{a},\mathcal{S}^{a},FG^{a},\Delta^{a}\right)$,
in which $FG^{a}:=\{(f_{v,w}^{a},g_{v,w}^{a})\}_{(v,w)\in\mathcal{V}^{a}}$, $\Delta^{a}:=\{\Delta^{a}_{e}\}_{e\in\mathcal{E}^{a}}$, and $\mathcal{D}^{a}$ and $\mathcal{S}^{a}$ denote the augmented sets of admissibility and switching surfaces, respectively.

\begin{example}
In this paper, we will consider walking gait of Vision 60 whose graph $\mathcal{G}$ is assumed to have $8$ continuous-time domains and $8$ discrete-time domains. It can be shown that $\mathcal{G}^{a}:=\mathcal{G}\boxtimes\mathcal{G}$ has $64$ continuous-time domains and $192$ discrete-time domains (see \ref{Unified_Illsuartion}a).
\end{example}


\vspace{-0.4em}
\section{DISTRIBUTED FEEDBACK CONTROLLERS}
\label{DISTRIBUTED FEEDBACK CONTROLLERS}

We will consider complex periodic locomotion of the composite robot. This section aims to present the structure of the proposed distributed feedback control algorithms that will stabilize complex dynamic gaits for cooperative locomotion of two legged robots. We will also investigate some properties of the complex hybrid model. Sections \ref{HZD-BASED NOMINAL CONTROLLERS} and \ref{SYNTHESIS OF DISTRIBUTED CONTROLLERS BASED ON LOCAL QPS} will synthesize distributed controllers for the cooperative locomotion. We define a periodic gait for one agent as follows.

\begin{assumption}\label{Periodic Gait of One Agent}
\textit{\textbf{(Periodic Gait of One Agent):}} We suppose that there is a family of \textit{nominal} state feedback laws $\Gamma^{\textrm{nom}}(t,x):=\{\Gamma_{v}^{\textrm{nom}}(t,x)\}_{v\in\mathcal{V}}$ that generates a periodic orbit (i.e., gait) for the hybrid model of one agent $\Sigma$. In particular, there is a periodic state trajectory (i.e., solution) $\varphi^{\star}:[0,\infty)\rightarrow\mathcal{X}$ for $\Sigma$ if we apply the nominal state feedback law $u=\Gamma_{v}^{\textrm{nom}}(t,x)$ during the continuous-time domain $v$ for all $v\in\mathcal{V}$. The corresponding \textit{periodic orbit} is given by $\mathcal{O}:=\{x=\varphi^{\star}(t)\,|\,0\leq{t}<T\}$ for some fundamental period $T>0$.
\end{assumption}

We then present the concept of measurable global variables for both agents (i.e., subsystems) as follows.

\begin{assumption}\label{Measurable Global Variables}
\textit{\textbf{(Measurable Global Variables):}} There is a set of quantities $\Theta$ that (i) depend on the global state variables $x^{a}=\textrm{col}(x_{1},x_{2})$, i.e., $\Theta=\Theta(x^{a})$ and (ii) are measurable for both subsystems via sensors. These variables are referred to as the \textit{measurable global variables}. We further assume that agents know the domain numbers of each other. In particular, $(v,w)\in\mathcal{V}^{a}$ is known for both agents.
\end{assumption}

Using this hypothesis, we propose a \textit{parameterized family of local controllers} for the agent $i\in\{1,2\}$ as follows
\begin{equation}\label{distributed_controllers}
u_{i}=\Gamma^{i}_{v,w}\left(t,x_{i},\Theta,\xi_{i}\right), \quad (v,w)\in\mathcal{V}^{a}
\end{equation}
(see Fig. \ref{Unified_Illsuartion}b). Here, $\Gamma^{i}$ is a local state feedback law that has access to 1) the local state variables of the agent $i$, i.e., $x_{i}$, 2) the measurable global variables $\Theta(x^{a})$, and 3) the domain number of the other agent that is denoted by $w$. Furthermore, this local feedback law is parameterized by a set of adjustable local controller parameters $\xi_{i}$ to achieve stability of the complex gait. More specifically, we will show that the stability of the cooperative gaits will depend on the proper selection of local controller parameters $\xi_{i}$, $i\in\{1,2\}$.

Before we present the complex periodic gaits for cooperative locomotion of two agents, we consider the following assumption on distributed feedback controllers.

\begin{assumption}\label{Invariance to x and y}
The family of local feedback laws $\Gamma^{i}_{v,w}(t,x_{i},\Theta,\xi_{i})$ for $i\in\{1,2\}$ do \textit{not} depend on the horizontal displacements (i.e., Cartesian coordinates) of the robots on the walking surface. However, they are allowed to depend on the translational velocities of the robots.
\end{assumption}

\begin{definition}
\textit{\textbf{(Translation Operator):}} We define the \textit{translation operator} on $\mathcal{X}$ by $T_{d}(x)=T_{d}(q,\dot{q})$ which takes the state vector $x$ and adds the vector $d\in\Real^{2}$ to its Cartesian positions in the horizontal plane. This corresponds to moving the robot on the walking surface by the vector $d$ while keeping the other state variables of the robot unchanged.
\end{definition}

Now we are in a position to present the following result for complex gaits of cooperative locomotion.

\begin{theorem}\label{Complex Periodic Orbits}
\textit{\textbf{(Complex Periodic Orbits):}} \textit{Suppose that Assumptions \ref{Periodic Gait of One Agent}-\ref{Invariance to x and y} are satisfied and we employ the local feedback laws \eqref{distributed_controllers}. Let $d\in\Real^{2}-\{0\}$ be a vector and define the augmented trajectory as
$\mathcal{O}_{d}^{a}:=\{x^{a}\,|\,x_{1}=\varphi^{\star}(t),\, x_{2}=T_{d}\left(\varphi^{\star}(t)\right),\, 0\leq{t}<T\}$.
Assume that the distributed feedback controllers $\Gamma^{i}_{v,w}(t,x_{i},\Theta,\xi_{i})$, when evaluated on the augmented orbit $\mathcal{O}^{a}_{d}$, are reduced to the nominal state feedback laws for each agent, that is, for every $i\in\{1,2\}$ and $v\in\mathcal{V}$,
\begin{equation}\label{propertu_of_distributed_controllers}
\Gamma_{v,v}^{i}\left(t,x_{i},\Theta,\xi_{i}\right)=\Gamma_{v}^{\textrm{nom}}\left(t,x_{i}\right),\, \forall x^{a}\in\mathcal{O}_{d}^{a},\,\,\forall t\geq0.
\end{equation}
Then, $\mathcal{O}_{d}^{a}$ is a periodic orbit for the complex model $\Sigma^{a}$.}
\end{theorem}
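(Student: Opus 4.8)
The plan is to verify that the augmented trajectory $\mathcal{O}_d^a$ is left invariant by both the continuous-time flow and the discrete-time reset maps of the complex hybrid model $\Sigma^a$, and that it is periodic. The crucial observation is that along $\mathcal{O}_d^a$ both agents are always in the \emph{same} domain: since $x_1=\varphi^\star(t)$ and $x_2=T_d(\varphi^\star(t))$ are, by Assumption~\ref{Invariance to x and y} and the definition of $T_d$, the same trajectory up to a horizontal translation that does not affect the domain membership, if agent~$1$ is in domain $v$ then agent~$2$ is also in domain $v$. Hence on $\mathcal{O}_d^a$ the relevant continuous-time model is $f^a_{v,v},g^a_{v,v}$ and the relevant reset edges are the ``diagonal'' ones $(v,v)\rightarrow(v',v')$, so only the hypothesis \eqref{propertu_of_distributed_controllers} — the controllers reducing to $\Gamma^{\mathrm{nom}}_v$ on the diagonal — is needed.

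\textbf{Step 1 (holonomic constraint and Lagrange multipliers vanish).} First I would check that the pair $(x_1,x_2)=(\varphi^\star(t),T_d(\varphi^\star(t)))$ satisfies the holonomic constraint \eqref{holonomic_EE} with $\lambda^e\equiv 0$. Because $T_d$ is a rigid horizontal translation, $p^e(q_2)=p^e(q_1)+\mathrm{col}(d,0)$, so $\|p^e(q_1)-p^e(q_2)\|_2^2=\|d\|_2^2$ is constant, verifying \eqref{holonomic_EE}. Differentiating, $\dot p_1^e=\dot p_2^e$, so \eqref{dyn_10} and \eqref{dyn_15} hold with the kinematic terms $(p_1^e-p_2^e)^\top(J_1^e\ddot q_1-J_2^e\ddot q_2)$ and $\|\dot p_1^e-\dot p_2^e\|^2$ both vanishing when $\ddot q_1,\ddot q_2$ come from the single-agent dynamics. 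Then I would argue that with $\lambda^e=0$ the coupled dynamics \eqref{dyn_06}--\eqref{dyn_10} decouple into two copies of the single-agent dynamics \eqref{dyn_02}--\eqref{dyn_03}, and likewise \eqref{dyn_11}--\eqref{dyn_15} decouple (with $\delta\lambda^e=0$) into two copies of \eqref{dyn_05}. This uses translation-invariance of $D$, $H$, $B$, and the contact Jacobians under $T_d$ (standard for a floating-base Lagrangian model, since the Cartesian horizontal position of the base is a cyclic variable).

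\textbf{Step 2 (flow invariance).} Applying the local controllers \eqref{distributed_controllers} on $\mathcal{O}_d^a$ and invoking \eqref{propertu_of_distributed_controllers}, agent~$1$ receives $u_1=\Gamma^{\mathrm{nom}}_v(t,x_1)$ and agent~$2$ receives $u_2=\Gamma^{\mathrm{nom}}_v(t,x_2)$. By Step~1 the augmented dynamics restricted to $\mathcal{O}_d^a$ reduce to two uncoupled single-agent systems, each driven by its nominal controller; by Assumption~\ref{Periodic Gait of One Agent}, $\varphi^\star$ is a solution of the first and — by Assumption~\ref{Invariance to x and y}, which guarantees the closed-loop single-agent vector field commutes with $T_d$ — $T_d(\varphi^\star)$ is a solution of the second. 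Hence $x^a(t)$ stays on $\mathcal{O}_d^a$ under the flow. The same decoupling argument applied to the reset maps shows $\Delta^a_{(v,v)\rightarrow(v',v')}$ maps the pre-transition point $\mathrm{col}(\varphi^\star(T^-),T_d(\varphi^\star(T^-)))$ to $\mathrm{col}(\Delta_e(\varphi^\star(T^-)),T_d(\Delta_e(\varphi^\star(T^-))))$, which lies on $\mathcal{O}_d^a$. Periodicity with period $T$ is then inherited directly from periodicity of $\varphi^\star$ and the fact that $T_d$ is time-independent.

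\textbf{Main obstacle.} The routine but delicate part is Step~1: carefully justifying that all model data ($D$, $H$, $B$, $J^c_v$, $J^e$, the admissibility sets $\mathcal{D}$, and the switching surfaces $\mathcal{S}$) are invariant under the horizontal translation $T_d$, so that the single-agent nominal orbit genuinely lifts to a solution of the \emph{augmented} (coupled, constrained) system with the constraint force identically zero. Once translation-invariance is in hand, the rest is bookkeeping: the diagonal structure of $\mathcal{O}_d^a$ forces only the diagonal vertices $(v,v)$ and diagonal edges $(v,v)\to(v',v')$ of the strong product $\mathcal{G}^a$ to be visited, and \eqref{propertu_of_distributed_controllers} is exactly the hypothesis that makes the controllers collapse to the nominal ones there.
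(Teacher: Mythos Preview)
Your proposal is correct and follows essentially the same route as the paper's proof: both argue that on $\mathcal{O}_d^a$ the two agents share the same domain $v$, that the holonomic constraint and its derivatives hold with $\lambda^e=0$ (respectively $\delta\lambda^e=0$), and that by \eqref{propertu_of_distributed_controllers} together with Assumption~\ref{Invariance to x and y} the controllers reduce to the nominal ones so the coupled dynamics decouple into two translated copies of the single-agent closed loop. The paper's version is more compressed --- it directly notes $u_2=\Gamma_v^{\mathrm{nom}}(t,T_d(x_1))=\Gamma_v^{\mathrm{nom}}(t,x_1)=u_1$ and hence $\ddot p_1^e=\ddot p_2^e$ --- but the substance is the same.
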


\begin{proof}
Choose an arbitrary $x_{1}\in\mathcal{O}$ and let $v$ be the corresponding continuous-time domain vertex for the agent 1. Then, $x_{2}=T_{d}(x_{1})\in{}T_{d}(\mathcal{O})$ and $w=v$ is the domain vertex for the agent 2. For these state values, $p_{2}^{e}=p_{1}^{e}+\textrm{col}(d,0)$ and $\dot{p}_{2}^{e}=\dot{p}_{1}^{e}$. Consequently, the holonomic constraints \eqref{holonomic_EE} and its first-order time derivative are satisfied. We need to show that its second order time-derivative is also met. From  \eqref{propertu_of_distributed_controllers}, $u_{1}=\Gamma_{v,v}^{1}(t,x_{1},\Theta,\xi_{1})=\Gamma_{v}^{\textrm{nom}}(t,x_{1})$. This together with Assumption \ref{Invariance to x and y} implies that $u_{2}=\Gamma_{v}^{\textrm{nom}}(t,T_{d}(x_{1}))=\Gamma_{v}^{\textrm{nom}}(t,x_{1})=u_{1}$. Hence, the Lagrange multiplier $\lambda^{e}$ in \eqref{dyn_06} and \eqref{dyn_07} can be zero and thereby, $\ddot{p}_{2}^{e}=\ddot{p}_{1}^{e}$. This renders $\mathcal{O}_{d}^{a}$ invariant under the augmented continuous-time  dynamics of the complex model. An analogous reasoning can be presented for the invariance under the augmented discrete-time dynamics. Consequently, $\mathcal{O}_{d}^{a}$ is an augmented periodic orbit for the system.
\end{proof}


\section{HZD-BASED NOMINAL CONTROLLERS}
\label{HZD-BASED NOMINAL CONTROLLERS}

This section presents the nominal controllers that generate the periodic locomotion $\mathcal{O}$ for each agent. The modification of these feedback laws to develop the distributed feedback controllers for cooperative locomotion of two agents will be presented in Section \ref{SYNTHESIS OF DISTRIBUTED CONTROLLERS BASED ON LOCAL QPS}. Since the agents are identical, we drop the subscript $i\in\{1,2\}$  to simplify the presentation. During the continuous-time domain $v\in\mathcal{V}$, we consider virtual constraints with nonholonomic (i.e., relative degree one) and holonomic (i.e., relative degree two) components. The nonholonomic component is used to regulate the speed of the robot, wheres the holonomic component is used for position control. We employ standard input-output (I-O) linearization \cite{Isidori_Book} to asymptotically impose virtual constraints.

Consider an output function for the domain $v$ as $y_{v}(t,x)$ that can be decomposed as follows:
\begin{equation}\label{virtual_constraints}
y_{v}(t,x):=\begin{bmatrix}
y_{v}^{\textrm{nh}}(t,q,\dot{q})\\
y_{v}^{\textrm{h}}(t,q)
\end{bmatrix}:=\begin{bmatrix}
s(q,\dot{q})-s^{\star}(\tau,v)\\
C_{v}\left(q - q^{\star}(\tau,v)\right)
\end{bmatrix},
\end{equation}
in which the superscripts ``nh'' and ``h'' stand for the nonholonomic and holonomic components, respectively. In \eqref{virtual_constraints}, $s(q,\dot{q})\in\Real$ represents the forward speed of a point on the robot (i.e, head of the robot), $s^{\star}(\tau,v)$ denotes the desired evolution of the speed on the orbit $\mathcal{O}$ in terms of the gait timing variable and the continuous-time domain $v\in\mathcal{V}$, $\tau$ denotes the gait timing variable (i.e., phasing variable) that is taken as the scaled time for each domain, and $q^{\star}(\tau,v)$ represents the desired evolution of the configuration variables on  $\mathcal{O}$ during the domain $v$. Finally, $C_{v}$ is an output matrix that affects the stability of the gait. In particular, our previous work has shown that the proper selection of the output matrix $C_{v}$ can change the stability behaviors of the gait \cite{Hamed_Buss_Grizzle_BMI_IJRR,Hamed_Gregg_decentralized_control_IEEE_CST,Hamed_Ma_Ames_Vision60}. Differentiating the virtual constraints \eqref{virtual_constraints} yileds
\begin{align}\label{IO_Linearization}
\begin{bmatrix}
\dot{y}_{v}^{\textrm{nh}}\\
\ddot{y}_{v}^{\textrm{h}}
\end{bmatrix}=
\underbrace{\begin{bmatrix}
\textrm{L}_{g_{v}}y_{v}^{\textrm{nh}}\\
\textrm{L}_{g_{v}}\textrm{L}_{f_{v}}y_{v}^{\textrm{h}}
\end{bmatrix}}_{=:A_{v}(t,x)}u+\underbrace{\begin{bmatrix}
\textrm{L}_{f_{v}}y_{v}^{\textrm{nh}}\\
\textrm{L}_{f_{v}}^{2}y_{v}^{v}
\end{bmatrix}}_{=:b_{v}(t,x)}=-\underbrace{\begin{bmatrix}
K_{p}\,y_{v}^{\textrm{nh}}\\
K_{p}\,y_{v}^{\textrm{h}}+K_{d}\,\dot{y}_{v}^{\textrm{h}}
\end{bmatrix}}_{=:e_{v}(t,x)},
\end{align}
where $K_{p}$ and $K_{d}$ are positive-definite gains. From \eqref{IO_Linearization}, one can solve for the nominal state feedback law as follows:
\begin{equation}
u=\Gamma^{\textrm{nom}}_{v}(t,x):=-A_{v}^\top\left(A_{v}\,A_{v}^\top\right)^{-1}\left(b_{v}+e_{v}\right)
\end{equation}
that will result in the asymptotic output tracking, i.e., $\lim_{t\rightarrow\infty}y_{v}(t)=0$. Here, we assume that the decoupling matrix $A_{v}$ is full-rank with a number of rows less than or equal to the number of actuators (i.e., columns).


\vspace{-0.3em}
\section{SYNTHESIS OF DISTRIBUTED CONTROLLERS}
\label{SYNTHESIS OF DISTRIBUTED CONTROLLERS BASED ON LOCAL QPS}

The objective of this section is to synthesize the distributed feedback controllers that satisfy the properties of Section \ref{DISTRIBUTED FEEDBACK CONTROLLERS}. We will make use of two local real-time QPs (one for each agent) to synthesize these controllers. For this purpose, we consider the following set of measurable global variables
\begin{equation*}
    \Theta\!:=\!\textrm{col}\left(s_{1},s_{2},q_{1}^{\textrm{roll}},q_{2}^{\textrm{roll}},q_{1}^{\textrm{pitch}},q_{2}^{\textrm{pitch}},
    \dot{q}_{1}^{\textrm{roll}},\dot{q}_{2}^{\textrm{roll}},\dot{q}_{1}^{\textrm{pitch}},\dot{q}_{2}^{\textrm{pitch}}\right)
\end{equation*}
that are available for both subsystems. Here, $s_{i}$, $q_{i}^{\textrm{roll}}$, $q_{i}^{\textrm{pitch}}$, $\dot{q}_{i}^{\textrm{roll}}$, and $\dot{q}_{i}^{\textrm{pitch}}$ represent the forward speed, roll angle, pitch angle, roll angular velocity, and pitch angular velocity for the agent $i\in\{1,2\}$, respectively. Suppose further that the agents $i\in\{1,2\}$ and $j\neq{}i\in\{1,2\}$ are in the continuous-time domains $v$ and $w$, respectively. We would like to modify the virtual constraints for agents to allow stable cooperative locomotion. Let us assume that $y_{v,w}^{i}(t,x_{i},\Theta,\xi_{i})$ denotes the \textit{modified virtual constraints} for the agent $i$. In our notation, the modified virtual constraints depend on 1) the time $t$, 2) the local state variables $x_{i}$, and 3) the measurable global variables $\Theta$. Furthermore, they are parameterized by a set of adjustable controller parameters, represented by $\xi_{i}$. One typical choice for the modified outputs can be as follows:
\begin{align}\label{modified_VCs}
    y_{v,w}^{i}\left(t,x_{i},\Theta,\xi_{i}\right):&=\begin{bmatrix}
    y_{v}^{\textrm{nh}}\left(t,q_{i},\dot{q}_{i}\right)\\
    y_{v}^{\textrm{h}}\left(t,q_{i}\right)
    \end{bmatrix}\nonumber\\
    &-\begin{bmatrix}
    \alpha_{v,w}\left(s_{j}-s^{\star}(\tau,w\right)\\
    C_{v,w}^{\textrm{roll}}\left(q_{j}^{\textrm{roll}}-q^{\star,\textrm{roll}}\left(\tau,w\right)\right)
    \end{bmatrix}\nonumber\\
    &-\begin{bmatrix}
    0\\
    C_{v,w}^{\textrm{pitch}}\left(q_{j}^{\textrm{pitch}}-q^{\star,\textrm{pitch}}\left(\tau,w\right)\right)
    \end{bmatrix},
\end{align}
where $\alpha_{v,w}$ as well as $C_{v,w}^{\textrm{roll}}$ and $C_{v,w}^{\textrm{pitch}}$ are scalars and vectors with proper dimensions to be determined. In \eqref{modified_VCs}, the nonholonomic output for the agent $i$ is corrected according to the additive term $-\alpha_{v,w}(s_{j}-s^{\star}(\tau,w))$ which takes into account the speed of the agent $j$. Note that the original nonholonomic term $y^{\textrm{nh}}_{v}(t,q_{i},\dot{q}_{i})$, defined in \eqref{virtual_constraints}, vanishes on the continuous-time domain $v$ of the orbit $\mathcal{O}$. According to the construction procedure, the additive term is also zero on the domain $w$ of the same orbit. Hence, on the domain $(v,w)$ of the complex gait $\mathcal{O}_{d}^{a}$, the modified nonholonomic term is zero. The additive terms for the holonomic portion of the modified output include the roll and pitch measurements of the other agent that are given by the terms $-C_{v,w}^{\textrm{roll}}(q_{j}^{\textrm{roll}}-q^{\star,\textrm{roll}}(\tau,w))$ and $-C_{v,w}^{\textrm{pitch}}(q_{j}^{\textrm{pitch}}-q^{\star,\textrm{pitch}}(\tau,w))$, respectively. In an analogous manner, one can show that the modified holonomic output is zero on the continuous-time domain $(v,w)$ of the cooperative gait $\mathcal{O}_{d}^{a}$. In Section \ref{NUMERICAL RESULTS}, we will show that the stability of the complex gait depends on the proper selection of the parameters $\alpha_{v,w}$, $C_{v,w}^{\textrm{roll}}$, and $C_{v,w}^{\textrm{pitch}}$. Let us define the adjustable controller parameters as follows:
\begin{equation}\label{controller_parameters}
\xi_{i}:=\left\{\alpha_{v,w},C_{v,w}^{\textrm{roll}},C_{v,w}^{\textrm{pitch}}\right\}_{(v,w)\in\mathcal{V}^{a}}.
\end{equation}
We remark that in general the controller parameters can be different for the agents. However, since the models of two agents are assumed to be identical, we assume that $\xi_{i}=\xi_{j}$ for $i,j\in\{1,2\}$.

We are now interested in regulating the modified outputs. Eliminating the Lagrange multipliers in \eqref{dyn_06}-\eqref{dyn_10} will result in the following coupled dynamics for the vertex $(v,w)$ of the complex graph
\begin{equation}\label{coupled_dyn}
    \begin{bmatrix}
    D_{1} & 0\\
    0 & D_{2}
    \end{bmatrix}\begin{bmatrix}
    \ddot{q}_{1}\\
    \ddot{q}_{2}
    \end{bmatrix} + \begin{bmatrix}
    H_{v,w,1}^{a}\\
    H_{v,w,2}^{a}
    \end{bmatrix}=\begin{bmatrix}
    B_{v,w,11}^{a} & B_{v,w,12}^{a}\\
    B_{v,w,21}^{a} & B_{v,w,22}^{a}
    \end{bmatrix}\begin{bmatrix}
    u_{1}\\
    u_{2}
    \end{bmatrix},
\end{equation}
where $D_{i}:=D(q_{i})$ for $i\in\{1,2\}$, and $H_{v,w,i}^{a}$ and $B_{v,w,ij}^{a}$ depend on the augmented state variables $x^{a}$ for $i,j\in\{1,2\}$. Since 1) the agent $i$ does not have access to all state variables of the agent $j$, and 2) the agent $i$ cannot make decision for the control action of the agent $j\neq{}i$ (i.e., $u_{j}$), we need to approximate the coupled dynamics \eqref{coupled_dyn} for the I-O linearization purpose of the modified output $y^{i}_{v,w}$. As the agent $i$ has access to its own local state variables $x_{i}$ as well as the measurable global variables $\Theta$, it is reasonable to assume that it can approximate the ``remaining'' portion of the augmented state variables by their desired evolution on the domain $w$ of the orbit $\mathcal{O}$ at any time $t$. Using this assumption, the agent $i$ can approximate its own coupled dynamics in  \eqref{coupled_dyn} as follows:
\begin{equation}\label{approx_dyn}
    D_{i}\,\ddot{q}_{i} + \hat{H}_{v,w,i}^{a} = \hat{B}_{v,w,ii}^{a}\,u_{i} + \hat{B}_{v,w,ij}^{a}\, u_{j}^{\star}(t,w),
\end{equation}
where $u_{j}^{\star}(t,w)$ represents the feedforward torques on domain $w$ of the orbit $\mathcal{O}$. Moreover, $\hat{H}_{v,w,i}^{a}$, $\hat{B}_{v,w,ii}^{a}$, and $\hat{B}_{v,w,ij}^{a}$ are approximations of $H_{v,w,i}^{a}$, $B_{v,w,ii}^{a}$, and $B_{v,w,ij}^{a}$, respectively, using the above-mentioned assumption.

Next, the I-O linearization along \eqref{approx_dyn} yields
\begin{alignat}{4}
&A_{v,w}^{i}\left(t,x_{i},\Theta,\xi_{i}\right) u_{i} &&+ b_{v,w}^{i}\left(t,x_{i},\Theta,\xi_{i}\right)\nonumber\\
& &&= -e_{v,w}^{i}\left(t,x_{i},\Theta,\xi_{i}\right),\label{modified_IO_Linearization}
\end{alignat}
where $A_{v,w}^{i}$, $b_{v,w}^{i}$, and $e_{v,w}^{i}$ are the extensions of the terms that were computed for the locomotion of a single agent  in \eqref{IO_Linearization}. Now we are in a position to present the local QP for each agent. The objective of the local QPs is to modify the control inputs for each agent to be close enough to its own nominal HZD-based control while imposing the modified virtual constraints for cooperative locomotion. More specifically, we consider the following local QP for the agent $i\in\{1,2\}$ at every time sample (e.g., $1$kHz) (see Fig. \ref{Unified_Illsuartion}b)
\begin{alignat}{4}
&\min_{(u_{i},\delta)} \,\,\, && \frac{1}{2}\left\|u_{i} - \Gamma_{v}^{\textrm{nom}}\left(t,x_{i}\right)\right\|^{2} + \frac{w}{2} \|\delta\|^{2}\label{QP}\\
&\textrm{s.t.} && A_{v,w}^{i} \,u_{i} + b_{v,w}^{i} + \delta = -e_{v,w}^{i}\label{eq_const}\\
& && u_{\min}\leq{u_{i}}\leq{}u_{\max},\,\,\delta_{\min}\leq\delta\leq\delta_{\max},\label{ineq_const}
\end{alignat}
in which $w>0$ is a weighting factor, and $\delta$ is a defect variable to satisfy the equality constraint \eqref{eq_const} in case \eqref{modified_IO_Linearization} cannot be met. This may happen if the modified decoupling matrix $A_{v,w}^{i}$ is not full-rank. The cost function \eqref{QP} tries to make the local controller $u_{i}$ close enough to the nominal controller while keeping the $2$-norm of the defect variable small. Inequality constraints \eqref{ineq_const} ensure the feasibility of the applied torques, where $u_{\min}$, $u_{\max}$, $\delta_{\min}$, and $\delta_{\max}$ denote the lower and upper bounds for the decision variables.

\begin{remark}
As the nominal HZD-based controllers stabilize the motion of each single, we would like to find an optimal solution that is close enough to the nominal controller while satisfying the modified virtual constraints. The solution of the QP will be then a combination of the nominal HZD-based controller and modified virtual constraint controllers, and this combination will be parameterized by the weighing factor $w$. We have observed that this combination is important in stabilizing cooperative gaits (see Section \ref{NUMERICAL RESULTS}).
\end{remark}

\begin{theorem}\label{Properties of the QP Solutions}
\textit{\textbf{(Properties of the Optimal Local Controllers):}} \textit{The solutions of the local QPs \eqref{QP}-\eqref{ineq_const} satisfy the property \eqref{propertu_of_distributed_controllers}.}
\end{theorem}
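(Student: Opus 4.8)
The plan is to show that at every point of $\mathcal{O}_{d}^{a}$ the pair $(u_{i},\delta)=(\Gamma_{v}^{\textrm{nom}}(t,x_{i}),0)$ is the \emph{unique} optimizer of the local QP \eqref{QP}--\eqref{ineq_const}, so that the feedback map $\Gamma_{v,v}^{i}$ implicitly defined by the QP coincides there with $\Gamma_{v}^{\textrm{nom}}$, which is exactly the claim \eqref{propertu_of_distributed_controllers}. The argument rests on three observations: (i) along $\mathcal{O}_{d}^{a}$ both agents occupy the \emph{same} domain $v$, so the relevant QP is the one indexed by $(v,v)$ --- this is the configuration already analysed in the proof of Theorem~\ref{Complex Periodic Orbits}; (ii) along $\mathcal{O}_{d}^{a}$ the data $(A_{v,v}^{i},b_{v,v}^{i},e_{v,v}^{i})$ appearing in the equality constraint \eqref{eq_const} reduce to the nominal data $(A_{v},b_{v},e_{v})$ of \eqref{IO_Linearization}; (iii) the objective \eqref{QP} is a strictly convex quadratic in $(u_{i},\delta)$ and the feasible set is a nonempty polyhedron, so its minimizer is unique.

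The core of the proof is (ii). First, the correction terms appended to the virtual constraints in \eqref{modified_VCs} vanish on $\mathcal{O}_{d}^{a}$: because agent $j$ lies on the (translated) nominal orbit in domain $v$, one has $s_{j}=s^{\star}(\tau,v)$, $q_{j}^{\textrm{roll}}=q^{\star,\textrm{roll}}(\tau,v)$, $q_{j}^{\textrm{pitch}}=q^{\star,\textrm{pitch}}(\tau,v)$, and the same identities hold for the relevant time derivatives along the orbital flow; hence $y_{v,v}^{i}$, $\dot{y}_{v,v}^{i}$, and $\ddot{y}_{v,v}^{i,\textrm{h}}$ coincide there with the nominal $y_{v}$, $\dot{y}_{v}$, $\ddot{y}_{v}^{\textrm{h}}$, which in turn forces $e_{v,v}^{i}=e_{v}$ on $\mathcal{O}_{d}^{a}$. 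Second, the approximated coupled dynamics \eqref{approx_dyn} used for the input--output linearization is \emph{exact} on $\mathcal{O}_{d}^{a}$: the surrogate state of agent $j$ equals its true state there, and by Theorem~\ref{Complex Periodic Orbits} the holonomic multiplier satisfies $\lambda^{e}=0$ along $\mathcal{O}_{d}^{a}$, so the coupling force in \eqref{dyn_06}--\eqref{dyn_07} drops out and \eqref{approx_dyn} reduces to the single--agent dynamics \eqref{dyn_02}--\eqref{dyn_03} that generated $\Gamma_{v}^{\textrm{nom}}$. Since the Lie derivatives defining $A_{v,v}^{i}$ and $b_{v,v}^{i}$ are then computed along the same vector fields and for the same output as in \eqref{IO_Linearization}, we obtain $A_{v,v}^{i}=A_{v}$ and $b_{v,v}^{i}=b_{v}$ on $\mathcal{O}_{d}^{a}$.

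Granting (ii), the candidate $(\Gamma_{v}^{\textrm{nom}}(t,x_{i}),0)$ is feasible: it satisfies \eqref{eq_const} because $A_{v}\,\Gamma_{v}^{\textrm{nom}}(t,x_{i})+b_{v}=-e_{v}$ is precisely the identity that defines the nominal controller from \eqref{IO_Linearization}; and it satisfies \eqref{ineq_const}, since the nominal gait is designed so that $u_{\min}\le\Gamma_{v}^{\textrm{nom}}(t,x_{i})\le u_{\max}$ on $\mathcal{O}$ and $\delta_{\min}\le0\le\delta_{\max}$. Moreover it attains the global lower bound $0$ of \eqref{QP}. By (iii) this feasible minimizer is unique, whence $\Gamma_{v,v}^{i}(t,x_{i},\Theta,\xi_{i})=\Gamma_{v}^{\textrm{nom}}(t,x_{i})$ for all $x^{a}\in\mathcal{O}_{d}^{a}$, which is \eqref{propertu_of_distributed_controllers}.

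I expect the main obstacle to be the verification in (ii): one must carefully track the appended correction terms of \eqref{modified_VCs} and the surrogate approximation of \eqref{approx_dyn} to confirm that the modified input--output relation \eqref{modified_IO_Linearization} genuinely collapses to the nominal relation \eqref{IO_Linearization} \emph{on} $\mathcal{O}_{d}^{a}$ --- in particular that the cross--input term $\hat{B}_{v,w,ij}^{a}\,u_{j}^{\star}$ and the redefined inertia and Coriolis blocks in \eqref{coupled_dyn} are consistent with $\lambda^{e}=0$ there --- and that the explicit time and phasing arguments of the controllers are evaluated consistently with the state lying on the orbit. The remaining parts (convexity, feasibility, uniqueness) are routine.
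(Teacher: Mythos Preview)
Your proposal is correct and follows essentially the same approach as the paper: both argue that on $\mathcal{O}_{d}^{a}$ the correction terms in \eqref{modified_VCs} vanish and the approximated dynamics \eqref{approx_dyn} become exact, so that \eqref{modified_IO_Linearization} collapses to \eqref{IO_Linearization} and the nominal controller with $\delta=0$ is the QP minimizer. Your version is in fact more careful than the paper's terse proof---you make explicit the strict convexity/uniqueness step, the feasibility with respect to the box constraints \eqref{ineq_const}, and the role of $\lambda^{e}=0$ from Theorem~\ref{Complex Periodic Orbits} in collapsing the coupled dynamics---whereas the paper simply asserts that the approximations equal their actual values on the orbit and that the optimal solution therefore equals $\Gamma_{v}^{\textrm{nom}}$.
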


\begin{proof}
According to the construction procedure, for every $x^{a}\in\mathcal{O}_{d}^{a}$, $(v,w)\in\mathcal{V}^{a}$, $i\in\{1,2\}$, and any controller parameters $\xi_{i}$, the modified virtual constraints $y_{v,w}^{i}(t,x_{i},\Theta,\xi_{i})$ and $\frac{\textrm{d}}{\textrm{d}t}y_{v,w}^{i}(t,x_{i},\Theta,\xi_{i})$ (for holonomic portions) are zero. In addition, the approximate terms $\hat{H}_{v,w}^{a}$, $\hat{B}_{v,w,ii}^{a}$, and $\hat{B}_{v,w,ij}^{a}$ are equal to their actual values on the orbit $\mathcal{O}_{d}^{a}$. Hence, the approximate dynamics \eqref{approx_dyn} as well as the approximate I-O linearization in \eqref{modified_IO_Linearization} become the precise ones for the complex dynamics on the orbit $\mathcal{O}_{d}^{a}$. This together with the fact that the modified outputs are zero on $\mathcal{O}_{d}^{a}$ implies that the optimal solution of local QPs \eqref{QP}-\eqref{ineq_const} on $\mathcal{O}_{d}^{a}$ is equal to the nominal HZD-based controllers which completes the proof.
\end{proof}

\begin{corollary}\label{Complex Gait with Local QPs Corollary}
\textit{\textbf{(Complex Gait with Local QPs):}} \textit{Suppose that Assumptions \ref{Periodic Gait of One Agent}-\ref{Invariance to x and y} are satisfied and we employ the optimal local controllers in \eqref{QP}-\eqref{ineq_const}. Then, $\mathcal{O}_{d}^{a}$ is a periodic orbit for the complex hybrid model $\Sigma^{a}$.}
\end{corollary}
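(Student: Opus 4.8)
The plan is to obtain this corollary by chaining Theorems \ref{Complex Periodic Orbits} and \ref{Properties of the QP Solutions}, with a small amount of bookkeeping to confirm that the QP-based controller legitimately plays the role of the distributed feedback law \eqref{distributed_controllers} in Theorem \ref{Complex Periodic Orbits}. First I would argue that the optimal solution of the local QP \eqref{QP}--\eqref{ineq_const} defines a well-posed local state feedback law. For fixed $(v,w)\in\mathcal{V}^{a}$ the cost \eqref{QP} is strictly convex in $(u_{i},\delta)$, and the feasible set is nonempty because the defect variable $\delta$ makes the equality constraint \eqref{eq_const} solvable for bounded $\delta$; hence the minimizer is unique. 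Since the QP data --- namely $A_{v,w}^{i}$, $b_{v,w}^{i}$, $e_{v,w}^{i}$, the nominal controller $\Gamma_{v}^{\textrm{nom}}(t,x_{i})$, and the constant bounds $u_{\min},u_{\max},\delta_{\min},\delta_{\max}$ --- depend only on $t$, the local state $x_{i}$, the measurable global variables $\Theta$, and the parameters $\xi_{i}$ (the pair $(v,w)$ being known to both agents by Assumption \ref{Measurable Global Variables}), the unique minimizer is a function of exactly those arguments. We may therefore set $\Gamma_{v,w}^{i}(t,x_{i},\Theta,\xi_{i})$ equal to this minimizer, which is precisely of the form \eqref{distributed_controllers}.

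Second, I would verify that these feedback laws inherit Assumption \ref{Invariance to x and y}: none of the objects entering the QP depends on the horizontal Cartesian displacement of either robot. This follows because the nominal virtual constraints \eqref{virtual_constraints}, the modified virtual constraints \eqref{modified_VCs}, and the approximated dynamics \eqref{approx_dyn} are all built from shape coordinates, the forward speed $s$, orientation angles, and their velocities, but not from the horizontal positions; consequently $A_{v,w}^{i}$, $b_{v,w}^{i}$, $e_{v,w}^{i}$, and $\Gamma_{v}^{\textrm{nom}}$ are invariant under the translation operator $T_{d}$, and so is their QP combination. Hence Assumptions \ref{Periodic Gait of One Agent}--\ref{Invariance to x and y} hold for the closed-loop complex model obtained with the QP-based controller.

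Third, Theorem \ref{Properties of the QP Solutions} states exactly that these optimal local controllers satisfy property \eqref{propertu_of_distributed_controllers}; that is, evaluated on $\mathcal{O}_{d}^{a}$ they reduce to $\Gamma_{v}^{\textrm{nom}}(t,x_{i})$ for each agent. With \eqref{propertu_of_distributed_controllers} in hand, I would invoke Theorem \ref{Complex Periodic Orbits} applied to the feedback laws $\Gamma_{v,w}^{i}$ just constructed, which yields that $\mathcal{O}_{d}^{a}$ is a periodic orbit for $\Sigma^{a}$, completing the argument. No additional work is needed for the discrete-time transitions, since Theorem \ref{Complex Periodic Orbits} already handles continuous- and discrete-time invariance simultaneously.

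The only genuinely delicate point --- and the one I would spell out carefully, although it is already part of the proof of Theorem \ref{Properties of the QP Solutions} --- is that on the orbit the nominal controller is \emph{feasible} for the QP, so that $(u_{i},\delta)=(\Gamma_{v}^{\textrm{nom}}(t,x_{i}),0)$ attains zero cost and is therefore the unique optimizer. This requires that the nominal gait $\mathcal{O}$ respects the torque bounds $u_{\min}\le u\le u_{\max}$ (a standing requirement for a physically realizable gait) and that, on $\mathcal{O}_{d}^{a}$, the approximations $\hat{H}_{v,w}^{a}$, $\hat{B}_{v,w,ii}^{a}$, $\hat{B}_{v,w,ij}^{a}$ coincide with the exact coupled-dynamics terms so that \eqref{eq_const} is met with $\delta=0$. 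Everything beyond this is a direct combination of the two theorems.
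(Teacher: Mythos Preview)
Your proposal is correct and follows essentially the same approach as the paper, which simply states that the corollary is a direct consequence of Theorems \ref{Complex Periodic Orbits} and \ref{Properties of the QP Solutions}. You have added careful bookkeeping (well-posedness of the QP, translation invariance of the QP data, and feasibility of the nominal controller on the orbit) that the paper leaves implicit, but the logical skeleton is identical.
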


\begin{proof}
The proof is a result of Theorems \ref{Complex Periodic Orbits} and \ref{Properties of the QP Solutions}.
\end{proof}

\begin{remark}
\textit{\textbf{(Stability Modulo $d$):}} One immediate result from Corollary \ref{Complex Gait with Local QPs Corollary} and Assumption \ref{Invariance to x and y} is that the proposed local controllers can stabilize the cooperative locomotion by the proper selection of the controller parameters $\xi_{i}, i\in\{1,2\}$, but \textit{cannot} stabilize the location of the agents with respect to each other. In particular, $\mathcal{O}_{d}^{a}$ for any $d$ with the property $d\neq0$ can be a periodic orbit. We refer to this stability, \textit{stability modulo $d$}. Full-state stability requires the measurement of the absolute Cartesian positions of the agents which is \textit{not} considered in this paper.
\end{remark}


\begin{figure*}[!t]
\centering
\subfloat[\label{Original_VCs}]{\includegraphics[width=2.0in]{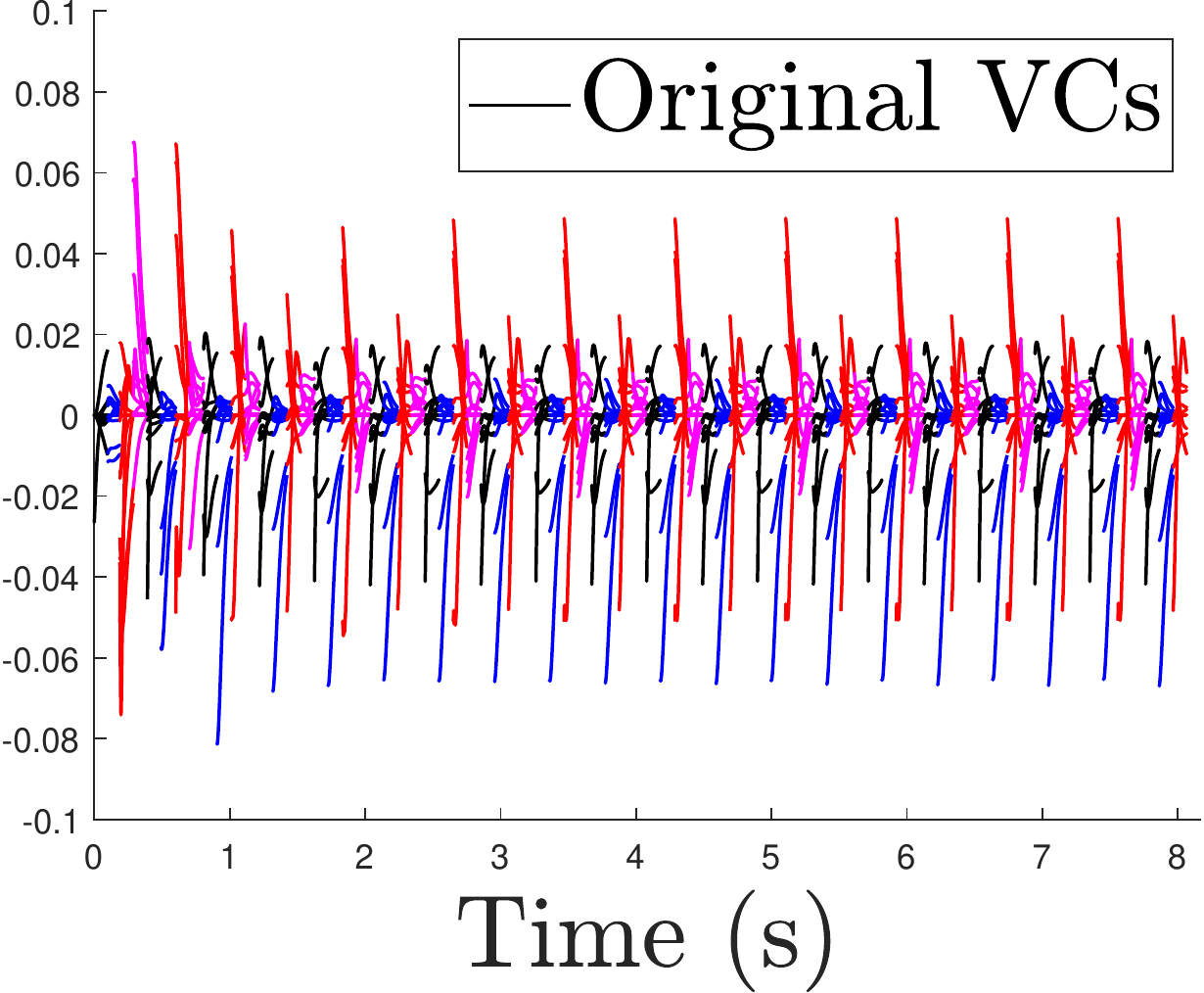}}\quad\,\,
\subfloat[\label{Original_VCs_Unstable}]{\includegraphics[width=2.3in]{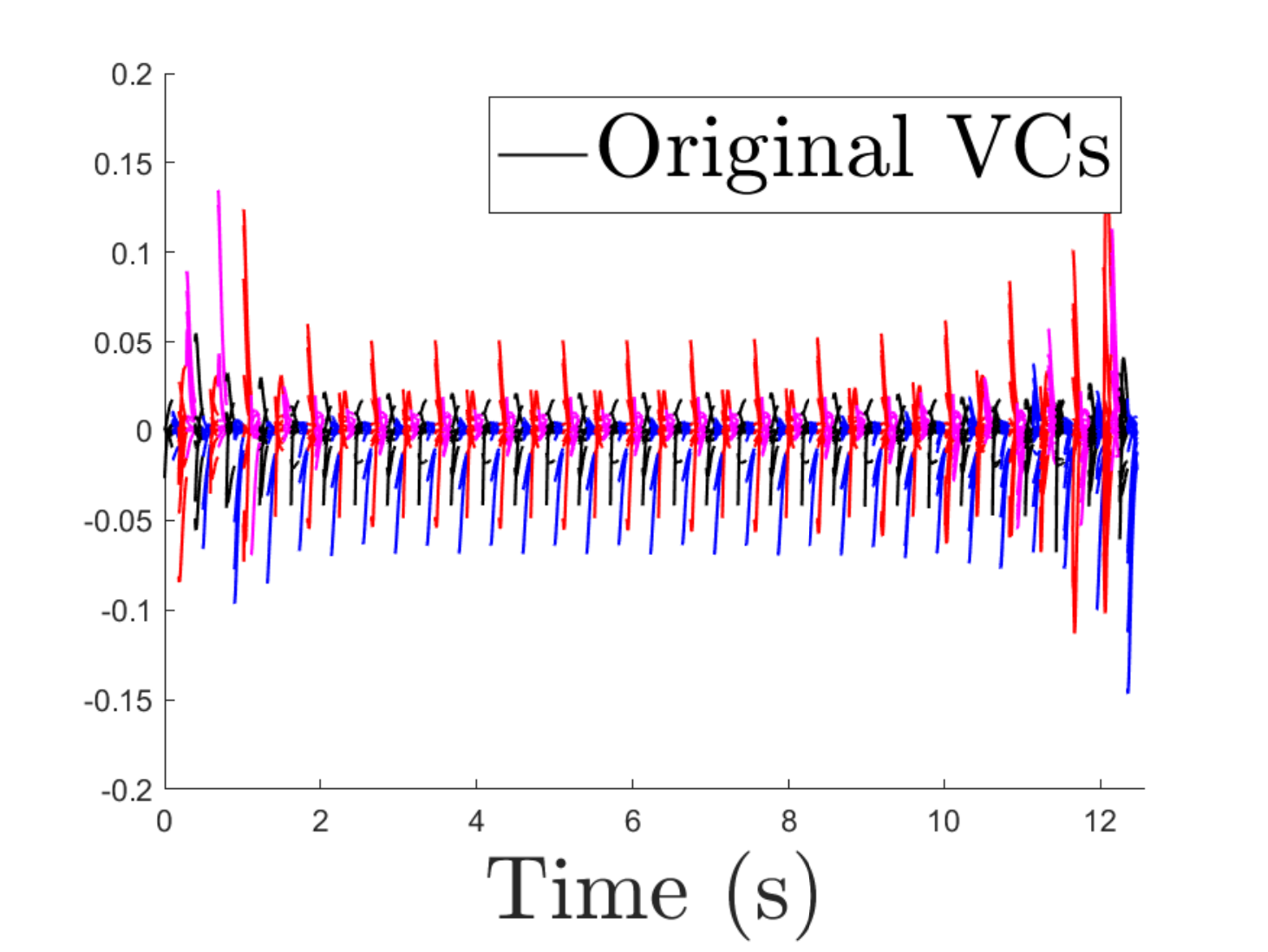}}
\subfloat[\label{Modified_VCs}]{\includegraphics[width=2.3in]{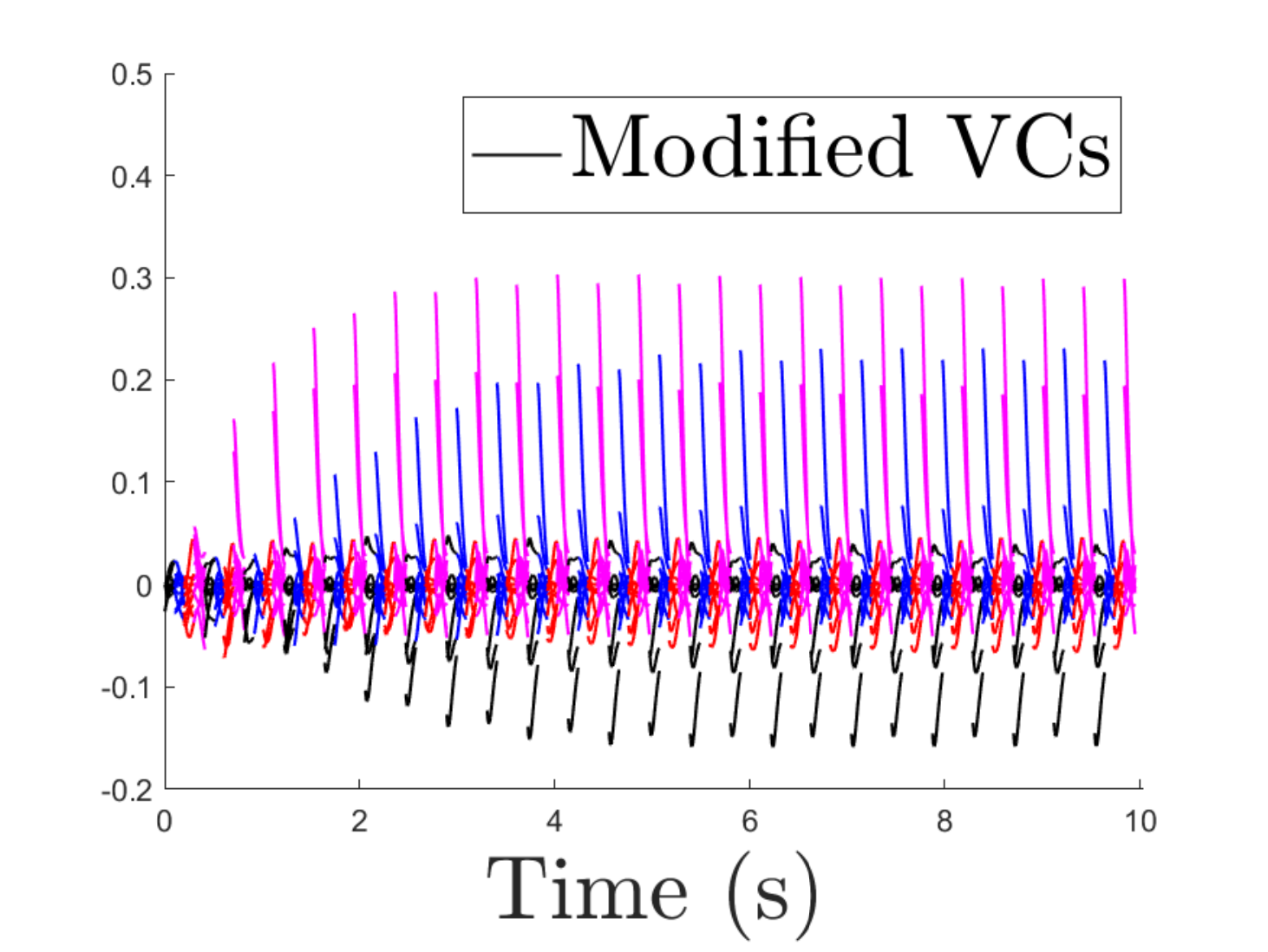}}
\vspace{-0.7em}
\caption{(a) Plot of the original virtual constraints with the nominal HZD controller for locomotion of a single agent versus time. The gait is stable. Colors distinguish different domains of locomotion. (b) Plot of the original virtual constraints for cooperative locomotion of two agents with the nominal HZD-based control (i.e., no modified virtual constraints and no QP). The complex gait is unstable. (c) Plot of the modified virtual constraints for cooperative locomotion of two agents with the proposed distributed controllers. The complex gait is stable.}
\label{l}
\vspace{-1.3em}
\end{figure*}

\begin{figure*}[!t]
\centering
\vspace{0.01em}
\subfloat[\label{phaseportrait_roll}]{\includegraphics[width=2.0in]{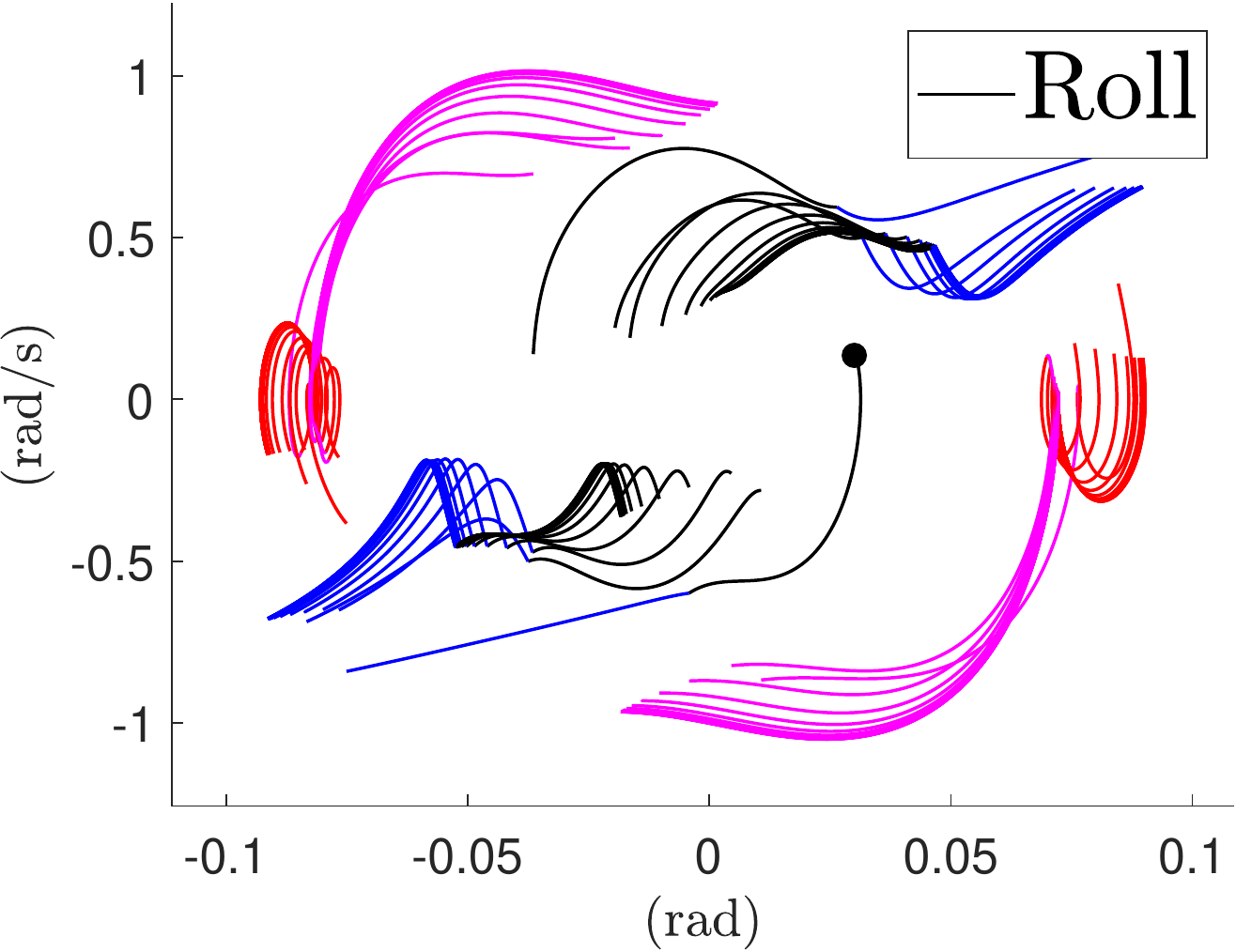}}
\subfloat[\label{phaseportrait_pitch}]{\includegraphics[width=2.0in]{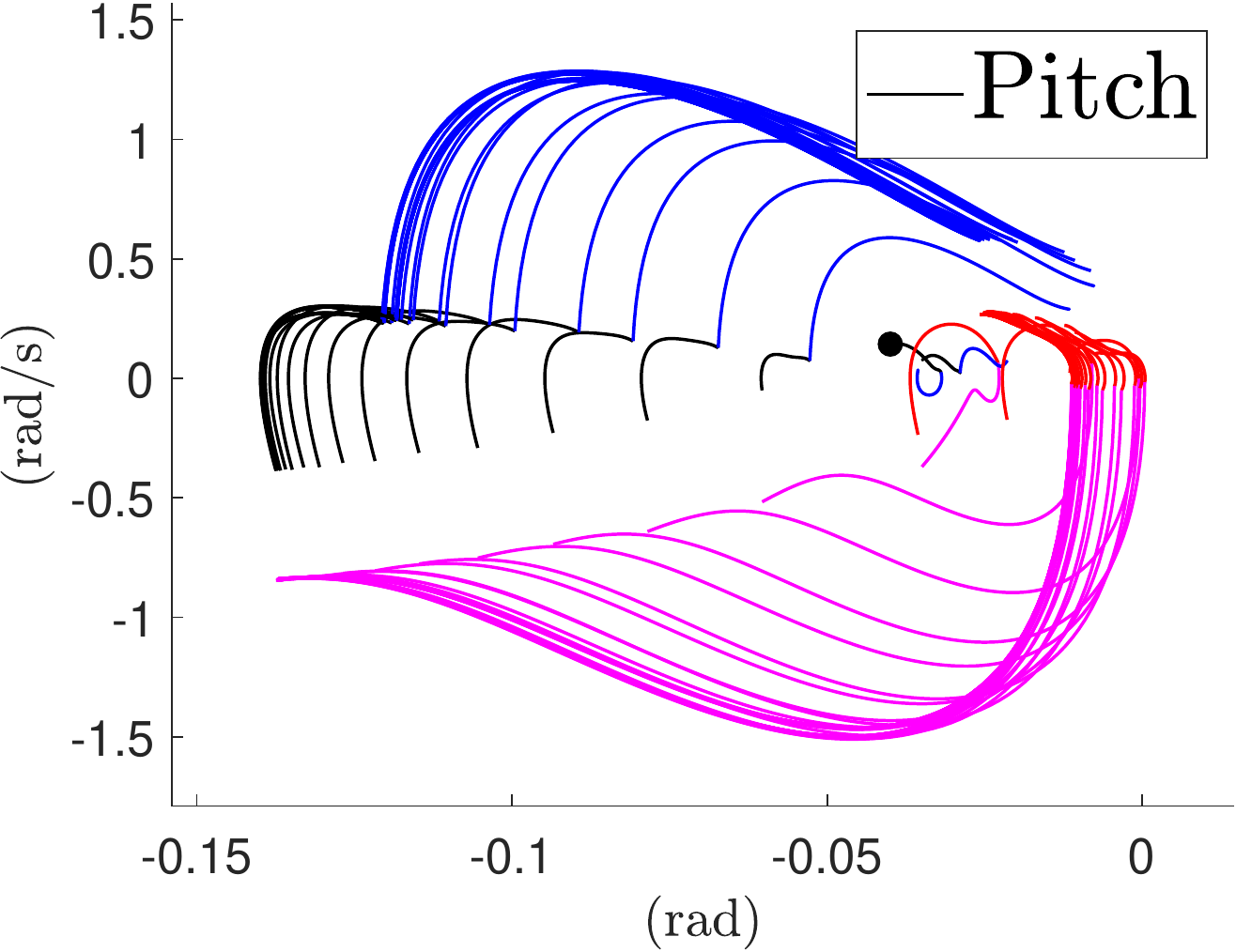}}
\subfloat[\label{phaseportrait_yaw}]{\includegraphics[width=2.0in]{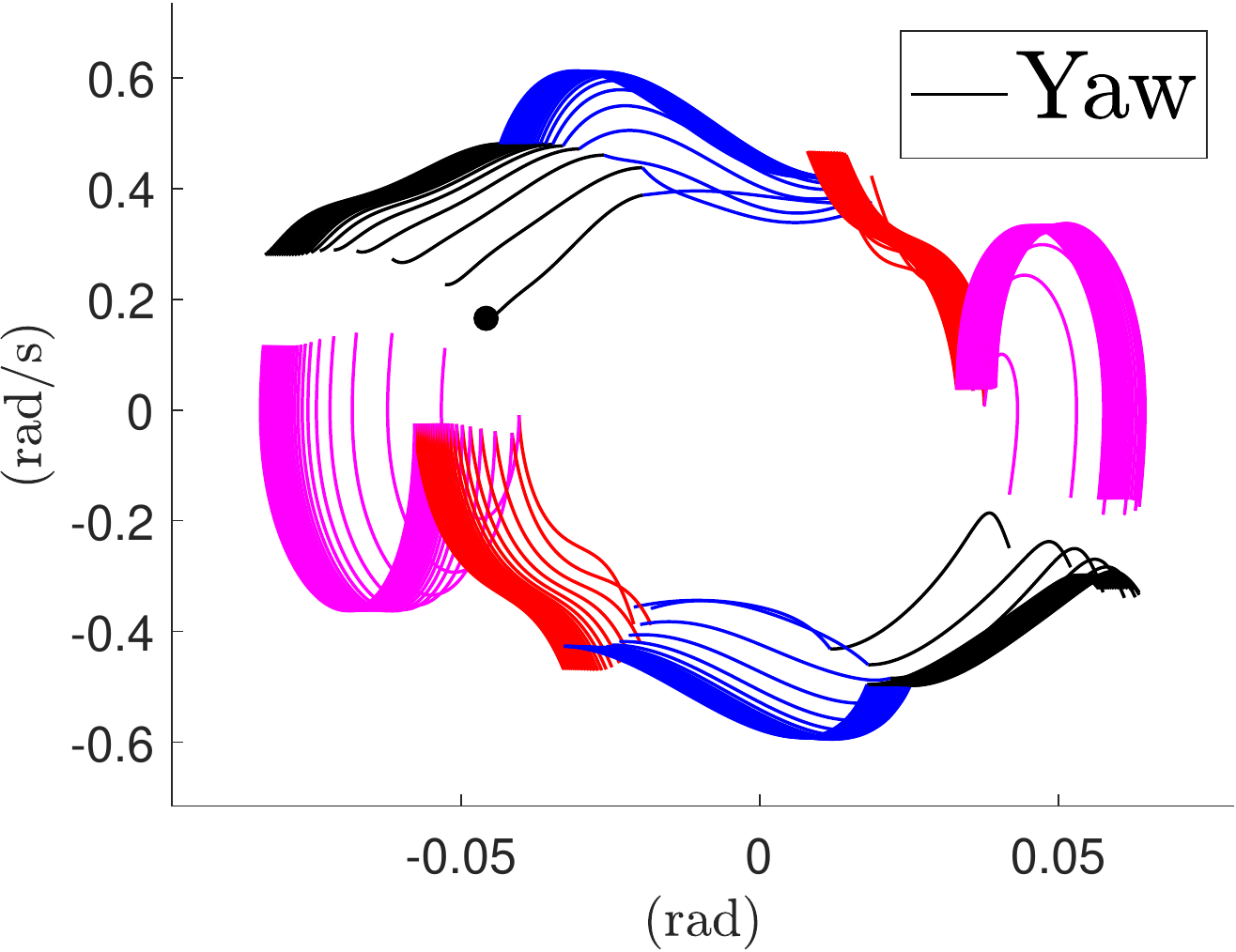}}
\vspace{-0.7em}
\caption{Phase portraits for the complex hybrid model of cooperative locomotion with the proposed distributed controllers. }
\label{phase_portraits_distributed_control}
\vspace{-1.5em}
\end{figure*}

\vspace{-0.3em}
\section{NUMERICAL RESULTS}
\label{NUMERICAL RESULTS}

The objective of this section is to numerically evaluate the effectiveness of the proposed distributed controllers for stabilizing cooperative locomotion of two Vision 60 robots augmented with 6 DOF Kinova arms. We consider a walking gait $\mathcal{O}$ for each agent with 8 continuous-time domains at the speed of $0.34$ (m/s) (see Fig. \ref{Unified_Illsuartion}a). The gait is designed via  FROST (Fast Robot Optimization and Simulation Toolkit) --- an open-source toolkit for path planning of dynamic legged locomotion \cite{FROST}. Our previous work in \cite{Hamed_Buss_Grizzle_BMI_IJRR} has shown that the stability of gaits in the HZD approach depends on the proper selection of the virtual constraints that is equivalent to the proper selection of the output matrices $C_{v}$ in \eqref{virtual_constraints}.

To exponentially stabilize the gait $\mathcal{O}$, we make use of the iterative optimization algorithm of \cite{Hamed_Ma_Ames_Vision60,Hamed_Gregg_decentralized_control_IEEE_CST} that was developed based on LMIs and BMIs to look for stabilizing values of $C_{v}$. Figure \ref{Original_VCs} depicts the time profile of the regular virtual constraints for stable locomotion of one agent. Here, we employ the nominal HZD-based controllers that were developed in Section \ref{HZD-BASED NOMINAL CONTROLLERS}. Convergence to a stable periodic motion is clear. Although the nominal HZD-based controllers can stabilize the walking gait for a single agent, they \textit{cannot} address the cooperative locomotion of two agents.

Figure \ref{Original_VCs_Unstable} illustrates the original virtual constraints for the complex hybrid model of collaborative locomotion, where each agent only employs its own nominal controller. Here, we assume that the agents carry a massless bar with the length of $1$ (m) (we suppose that $d=\textrm{col}(0,1)$). The initial augmented state is taken off of the orbit $\mathcal{O}_{d}^{a}$. Divergence from the periodic gait is clear which indicates the instability. To stabilize the gait, we modify the virtual constraints for each agent as proposed in \eqref{modified_VCs}. In this paper, we choose the distributed controller parameters based on the output matrices $C_{v}$ that are optimized using the BMI algorithm. Let $C_{v}^{\textrm{roll}}$ and $C_{v}^{\textrm{pitch}}$ denote the columns of the optimized matrix $C_{v}$ that correspond to the roll and pitch motions, respectively. We then choose
\begin{equation}\label{reduction_in_parameters}
C_{v,w}^{\textrm{roll}}=\beta_{v,w}\,C_{v}^{\textrm{roll}}\quad\textrm{and}\quad
C_{v,w}^{\textrm{pitch}}=\gamma_{v,w}\,C_{v}^{\textrm{pitch}},
\vspace{-0.4em}
\end{equation}
where $\beta_{v,w}$ and $\gamma_{v,w}$ are scalars to be determined. Equation \eqref{reduction_in_parameters} reduces the choice of the controllers parameters $\xi_{i}$ in \eqref{controller_parameters} to that of the scalars $\{\alpha_{v,w},\beta_{v,w},\gamma_{v,w}\}$. In this paper, we heuristically take $\alpha_{v,w}=\beta_{v,w}=\gamma_{v,w}=0.5$. Figure \ref{phase_portraits_distributed_control} illustrates the phase portraits for the roll, pitch, and yaw motions of one of the agents during cooperative locomotion. Convergence to the periodic orbit is clear. Figure \ref{Modified_VCs} depicts the time profile of modified virtual constraints. Snapshots of the cooperative locomotion are illustrated in Fig. \ref{snapshots}. The animation of these simulations can be found at \cite{YouTube_MultiagentACC2020}.



\begin{figure*}[t!]
\centering
\vspace{0.1em}
\includegraphics[width=\linewidth]{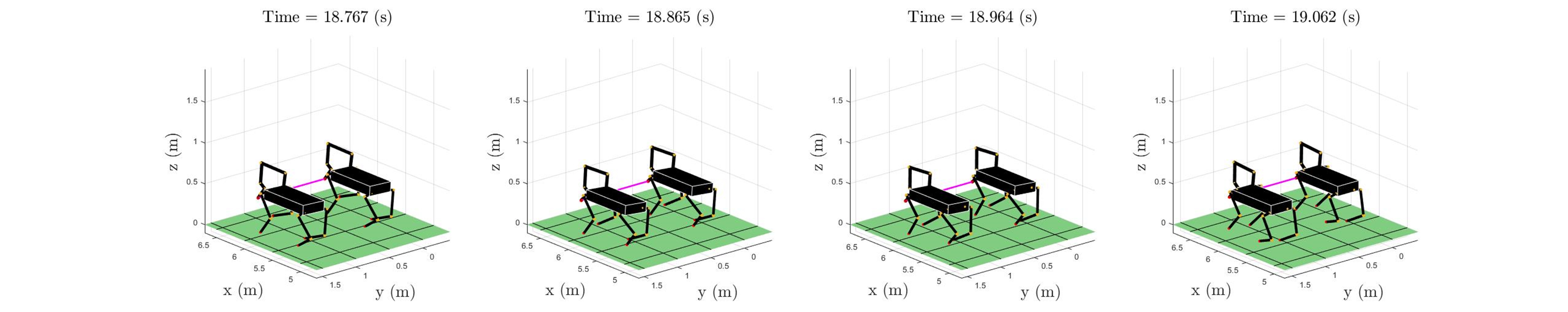}
\vspace{-1.5em}
\caption{Snapshots of four domains of the steady-state complex gait $\mathcal{O}_{d}^{a}$ stabilized by the distributed feedback controllers (see \cite{YouTube_MultiagentACC2020} for the animation of simulations.)}
\label{snapshots}
\vspace{-2em}
\end{figure*}

\vspace{-0.2em}
\section{CONCLUSIONS}
\label{CONCLUSIONS}

This paper presented an analytical approach to design distributed feedback controllers that stabilize cooperative locomotion of legged robots which are coupled to each other by holonomic constraints. We addressed complex hybrid dynamical models that represent collaborative locomotion of legged robots while steering objects with their arms. The paper studied properties and complex periodic orbits of these sophisticated and high-dimensional hybrid systems. Virtual constraints were devolved for stable locomotion of single agents and are imposed by nominal HZD-based controllers. The paper then modified the virtual constraints to achieve stable cooperative locomotion of two agents. The distributed controllers were implemented via local QPs to be close to the nominal HZD-based controllers while satisfying the modified virtual constraints. To demonstrate the power of the proposed approach, we employed the distributed controllers for an extensive numerical simulation that describes the complex hybrid model for cooperative locomotion of two Vision 60s with Kionva arms to steer an object. The complex model of locomotion has 64 continuous-time domains,  192 discrete-time transitions, 96 state variables, and 36 control inputs. For future work, we will experimentally investigate the distributed control algorithms on two Vision 60 robots with Kinova arms. We will also study safety-critical control algorithms that allow collaborative locomotion and obstacle avoidance in complex environments.


\vspace{-0.8em}
\bibliographystyle{IEEEtran}
\bibliography{references}

\end{document}